\numberwithin{equation}{section}
\newtheorem{thm}{Theorem}[section]
\newtheorem{defn}{Definition}[section]
\newtheorem{prop}[thm]{Proposition}
\newtheorem{ill}{Illustration}
\def\ni{\noindent}
\def\N{\mathbb{N}}
\def\cP{\mathcal{P}}
\title{\textbf{\sc A Note on the Tattoo Index of Graphs}}
\author{Johan Kok}
\affil{\small Tshwane Metropolitan Police Department\\ City of Tshwane, Republic of South Africa \\ {\tt kokkiek2@tshwane.gov.za.}}
\author{Naduvath Sudev}
\affil{\small Department of Mathematics\\ Vidya Academy of Science \& Technology \\ Thalakkottukara, Thrissur - 680501, India.\\ {\tt sudevnk@gmail.com}}
\date{}
\begin{document}
\maketitle

\begin{abstract}
Consider a network $D$ of pipes which have to be cleaned using some cleaning agents, called brushes, assigned to some vertices. The tattooing of a simple connected directed graph $D$ is a particular type of the cleaning in which an arc are coloured by the colour of the colour-brush transiting it and the tattoo number of $D$ is a corresponding derivative of brush numbers in it. In this paper, we introduce a new concept, called the tattoo index of a given graph $G$, which is an efficiency index related to the tattooing sequence and we establish some introductory results on this parameter.
\end{abstract}

\textbf{Keywords:} Tattooing of graphs, tattoo number, primary colour, colour-brushes, tattoo index.

\vspace{0.35cm}

\ni \textbf{Mathematics Subject Classification:} 05C20, 05C35, 05C38, 05C99.
 
\section{Introduction}

For a general reference to notation and concepts of graph theory and digraph theory, not defined specifically in this paper, please see \cite{BM1,BLS,CL1,FH,EWW,DBW}.  For the graph colouring concepts, please see \cite{CZ1,JT1}. Unless mentioned otherwise, all graphs and digraphs considered in this paper are simple, finite, connected and non-trivial.

\subsection{Graph Cleaning and Brush Number}

A graph cleaning model was introduced in \cite{SM1} as a combination of graph searching problems (see \cite{TDP1,TDP2}) and chip firing problems (see \cite{BLS1}). Assume that we have to clean a network $D$ of pipes on periodical basis using some cleaning agents called \textit{brushes}, assigned to some vertices of $D$. When a vertex is cleaned, a brush must travel down each contaminated edge. A contaminated edge can be considered to be cleaned when a brush traverse that edge. A graph $G$ is said to be cleaned when all its edges are cleaned.

\vspace{0.25cm}

In graph cleaning models, it is initially set that all edges of a simple connected undirected graph $G$ are dirty. We need to allocate a finite number of brushes, $\beta_G(v) \ge 0$ to each vertex $v \in V(G)$. The $\beta_G(v)$ brushes allocated to any vertex $v$  may clean the vertex and send one brush along each dirty edge and allocate an additional brush to its corresponding neighbour (vertex). When the edges of a simple connected graph $G$ are given some orientation so that $G$ becomes a directed graph then the brush allocated to a vertex may only clean along an out-arc from that vertex. The minimum number of brushes to be allocated to clean a graph for a given orientation $\alpha_i(G)$ is called the \textit{brush number} of the graph $G$, denoted $b_r^{\alpha_i}(G)$.

\subsection{Tattooing of Graphs}

As a special case of the graph cleaning models, the notion of tattooing of a graph was introduced in \cite{KS1}. In all studies on graph cleaning and brush number of graphs, brushes are considered to be identical objects and when cleaning is initiated from a vertex $v$, a particular brush can clean along any out-arc at random. The \textit{tattooing} of a simple connected directed graph $D$ is a particular type of the cleaning in which an arc will be coloured by the colour of the colour-brush transiting it and the \textit{tattoo number} of the directed graph $D$ is a corresponding derivative of brush numbers in it. 

\vspace{0.25cm}

Initial colours will be called \textit{primary colours} and primary colours will be allowed to blend into additional \textit{colour blends}. Consider a set of colour-brushes $\mathcal{C} = \{c_i:1\le i \le n; n \in \N\}$ of \textit{primary colours}. Let the initial allocation (at $t= 0$ of the tattooing process), say $s$, $s\ge 0$, of primary colours to a vertex $v$ of $G$ be the set $X_{t=0}(v) =\{c_i: i = 1,2,3,\ldots,s; s\in \N\}$. This allocation is allowed to mutate (which is also called blending) to include all possible \textit{primary blends} of colour-brushes. Hence, this allocation is the set $\cP_0(X_{t=0}(v))$ of all non-empty subsets of $(X_{t=0}(v))$.

\vspace{0.25cm}

A primary colour blend is the colour blend of at least two distinct primary colours. Primary colour blends may not mutate into \textit{secondary blends}. During the $i^{th}$-step of tattooing a number of identical primary colour-brushes or primary colour blends may arrive at a vertex $v$ without repetition. Obviously, $\{c_1,c_2,c_3\ldots, c_s\}_{s=0} = \emptyset$. 

\ni The following are some important definitions provided in \cite{KS1}.

\begin{defn}\label{Defn-2.1}{\rm 
At the $i^{th}$-step, $i\ge 0$, the \textit{tattoo power set} of $X_{t=i}(v)$ is defined to be
\begin{enumerate}
\item[(i)] $\cP^*(X_{t=i}(v)) = \cP_0(\{c_1,c_2,c_3,\ldots,c_s\})$ or $\emptyset$ (typically, but not exclusively at $0^{th}$-step), or
\item[(ii)] $\cP^*(X_{t=i}(v)) = \cP_0(\{c_i, c_j,c_k,\ldots c_t\})$, because only primary colour-brushes have arrived, or
\item[(iii)] $\cP^*(X_{t=i}(v)) = \cP_0(\{c_i, c_j,c_k,\ldots c_t\}\cup \{$primary colour blends $\in \cP^*_0(X_{t=i-1}(u)), \\ (u,v) \in A(G)\}$, or
\item[(iv)] $\cP^*(X_{t=i}(v)) = \{primary colour blends \in \cP^*_0(X_{t=i-1}(u)), (u,v) \in A(G)\}$ only.
\end{enumerate}
}\end{defn}

\begin{defn}\label{Defn-2.2}{\rm 
The \textit{tattoo label} of an arc $a_i$, denoted $l(a_i)$, is defined to be the ordered subscript(s) of either the primary colour-brush or the blended colour-brush tattooing along the arc. Furthermore, the sum of the entries of $l(a_i)$ is denoted $l_\Sigma(a_i)$.
}\end{defn}

\begin{defn}{\rm 
The \textit{random tattoo number} of a simple connected and randomly directed graph $G$ having orientation $\alpha_i(G)$, denoted by $\tau^{\alpha_i}(G)$, is the minimum number of times primary colour-brushes are allocated to vertices of $G$ to iteratively tattoo along all arcs of $G$ (excluding the transition of a primary colour-brush from one vertex to another). The \textit{tattoo number} of a simple connected graph $G$ denoted $\tau(G)$ is defined to be $\tau(G) = \min\{\tau^{\alpha_i}(G):\forall\, \alpha_i(G)\}\}$.
}\end{defn}

Motivated by various studies on brush number on graphs and the study on the tattoo number of graphs, in this paper, we now introduce certain new colouring parameter namely tattoo index of graphs and study some important and interesting properties of these parameters.  

\section{Tattoo Index of Graphs}

If a graph requires only the primary colour-brush $\{c_1\}$ to tattoo all arcs, the process is absolutely efficient. The more colour-brushes and colour blends needed, the less the tattooing efficiency will be. This can be expressed by the ratio $\frac{\epsilon(G)}{\tau(G)\cdot\sum\limits_{a_i\in A(G)}l_\Sigma(a_i)}$.

\vspace{0.25cm}

Let the initial colour-brush allocation of a directed cycle $C_n$ be $v_1 \mapsto\{c_1,c_2\},v_2\mapsto \emptyset, v_3\mapsto \emptyset \ldots v_n\mapsto \emptyset$. After mutation, we have $v_1 \mapsto\{\{c_1\},\{c_2\},\{c_1,c_2\}\},v_2\mapsto \emptyset, v_3\mapsto \emptyset \ldots v_n\mapsto \emptyset$. The tattooing is possible in one of the following six different ways.

\begin{enumerate}\itemsep0mm
\item[(a)] $c_1$ tattoo along $a_1,a_2,\ldots,a_{n-1}$ and $c_2$ tattoo along $(v_1,v_n)$ with $l(a_1) = l(a_2)=\ldots = l(a_n) = (1)$ and $l((v_1,v_n)) = (2)$ or vice versa, or 
\item[(b)] $c_1$ tattoo along $a_1,a_2,\ldots,a_{n-1}$ and $c_{1,2}$ tattoo along $(v_1,v_n)$ with $l(a_1) = l(a_2)=\ldots = l(a_n) = (1)$ and $l((v_1,v_n)) = (1,2)$  or vice versa, or
\item[(c)] $c_2$ tattoo along $a_1,a_2,\ldots,a_{n-1}$ and $c_{1,2}$ tattoo along $(v_1,v_n)$ with $l(a_1) = l(a_2)=\ldots = l(a_n) = (2)$ and $l((v_1,v_n)) = (1,2)$  or vice versa.
\end{enumerate}

Since $\cP^*_0(\{c_1\})= \{c_1\}$, the initial allocation of one colour-brush to a vertex will not be sufficient and therefore, we have $\tau(C_n)=2$. Therefore, it follows that for $\tau(C_7)=2$, the tattoo ratio ranges over $\frac{7}{16},\frac{7}{18},\frac{7}{26},\frac{7}{30},\frac{7}{38}$ and $\frac{7}{40}$. We observe that choice of arcs plays an important role here. The arc choice pattern may be denoted by $\mathfrak{A}_p(G)$. These observations motivate us to define a new parameter namely the tattoo index of a graph as follows.

\begin{defn}{\rm 
The \textit{tattoo index} of a graph $G$, denoted by $\mathfrak{T}(G)$, is defined as $\mathfrak{T}(G)=\max\{\frac{|E(G)|}{\tau(G)\cdot\sum\limits_{a_i\in A(G)}l_\Sigma(a_i)}\}$, where the maximum is taken over all $\mathfrak{A}_p(G)$ with respect to an optimal orientation.
}\end{defn}

To ensure minimality of arc labels (hence, maximality of the index) any addition of primary colours needed at a vertex $v$ to proceed with tattooing must be primary colours with smallest subscripts distinct from those already allocated to $v$. An analysis of the tattoo index of a graph is a complex problem. The brushing process introduced in  \cite{KSK1} is a special case of tattooing in that only one primary colour $c_1$ is utilised. Therefore, arc labels are always  absolute minima ensuring maximality of the tattooing index. 

\subsection{First Step Generalisation}

A first step towards generalisation would be to utilise distinct primary colours but to prohibit mutation (colour blending). Hence, if a vertex $v$ requires the allocation of say $\{\underbrace{c_1,c_1,\ldots,c_1}_{t-entries}\}$, for brush cleaning it is rather allocated $\{c_1,c_2,c_3,\ldots,c_t\}$. The minimum number of times colour-brushes are required by a graph $G$ with regards to this \textit{first step generalisation} abbreviated as \textit{FSG$_\tau(G)$} and is denoted by $b_\tau(G)$. Note that since repetition of identical colour-brushes at vertex $v$ during the $i^{th}$-step is eliminated in preparation of the $(i+1)$-th step of tattooing,  $b_r(G) \le b_\tau(G)$. An application is given for \textit{general friendship graphs} and the $J9$-graphs. First, we consider the conventional friendship graph $Fr(3,n)$.

\begin{prop}\label{Prop-3.1}
For friendship graphs in respect of FSG$(Fr(3,n))$, $n\ge 1$, we have
\begin{enumerate}\itemsep0mm
\item[(i)] $b_\tau(Fr(3,n))=2(n-1)$, 
\item[(ii)] $\mathfrak{T}_{FSG}(Fr(3,n))=\frac{3n}{2(n-1)(3n^2-5n +6)}$.
\end{enumerate}
\end{prop}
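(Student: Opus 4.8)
The plan is to prove the two parts in sequence, (i) first and then (ii) on top of it, using throughout the convention that $Fr(3,n)$ is the windmill with central vertex $v_0$ and triangles $T_i=v_0u_iw_i$ for $1\le i\le n$, so that $|E(Fr(3,n))|=3n$, and that in the FSG$_\tau$ regime blending is forbidden, every vertex's allocation is a \emph{set} of distinct primary colour-brushes, and no vertex ever carries two copies of the same primary colour during any step.

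For part (i) I would exhibit the orientation I claim is optimal and then prove matching bounds. Orient $T_2,\dots,T_n$ ``inward'', with arcs $(u_i,w_i),(u_i,v_0),(w_i,v_0)$, so that each $u_i$ has out-degree $2$ and $v_0$ is a local sink; orient $T_1$ ``outward'', with arcs $(v_0,u_1),(v_0,w_1),(u_1,w_1)$, so that $v_0$ has out-degree $2$. This orientation is acyclic and admits the cleaning order below. For the upper bound, allocate to each $u_i$ ($2\le i\le n$) a pair of fresh primary colours; the key point is that the no-repetition rule forces these $n-1$ pairs to be pairwise disjoint, since every one of these brushes is eventually delivered to $v_0$. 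Then tattoo by cleaning $u_i$ and $w_i$ in turn so that both of $u_i$'s colours reach $v_0$; once $v_0$ holds all $2(n-1)$ colours, clean $v_0$ (spending two colours onto $u_1,w_1$), then $u_1$, then $w_1$. Every arc is tattooed and exactly $2(n-1)$ primary colour-brushes were allocated, so $b_\tau(Fr(3,n))\le 2(n-1)$. For the lower bound I would argue that in any orientation all but at most one triangle must contain a local source other than $v_0$, that each such source needs two distinct primary colours, and that — again by the no-repetition condition at $v_0$ — none of these colours can be reused on a second such source, forcing at least $2(n-1)$ allocations. I expect the lower bound to be the main obstacle: one has to rule out orientations in which $v_0$ is an internal or sink vertex of several triangles and keep account of brushes that circulate back through $v_0$, so that the naive ``one free triangle, the rest cost two'' bookkeeping is genuinely shown to be unbeatable.

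For part (ii) I would run the tattooing of part (i) under the rule that each fresh primary colour is introduced with the smallest available subscript distinct from those already at the vertex, which — together with sending the smaller colour along the arc on which it is re-used, and letting $v_0$ forward its two smallest colours onto $T_1$ — pins down the label of every arc: in each $T_i$ with $i\ge2$ the colour $c_{2i-3}$ labels both $(u_i,w_i)$ and $(w_i,v_0)$ while $c_{2i-2}$ labels $(u_i,v_0)$, and in $T_1$ the colour $c_1$ labels both $(v_0,u_1)$ and $(u_1,w_1)$ while $c_2$ labels $(v_0,w_1)$. Then
$\sum_{a_i\in A(Fr(3,n))}l_\Sigma(a_i)=\big[1+1+2\big]+\sum_{i=2}^{n}\big[(2i-3)+(2i-3)+(2i-2)\big]$,
and a routine evaluation of this arithmetic sum gives $3n^2-5n+6$. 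Substituting $|E(Fr(3,n))|=3n$, $b_\tau(Fr(3,n))=2(n-1)$ from part (i), and this value into the definition of the tattoo index yields $\mathfrak{T}_{FSG}(Fr(3,n))=\dfrac{3n}{2(n-1)(3n^2-5n+6)}$. The delicate point here is showing that this arc-choice pattern $\mathfrak{A}_p(Fr(3,n))$ is the one realising the maximum: one must verify that any other admissible assignment — a different colour forwarded by $v_0$, or a different colour sent along the repeated arc inside some triangle — only concentrates less multiplicity on the smallest subscripts and so cannot decrease $\sum_{a_i\in A(Fr(3,n))}l_\Sigma(a_i)$. (The case $n=1$, where $Fr(3,1)=K_3$, is degenerate and is read off directly, the formula in (ii) being meaningful only for $n\ge 2$.)
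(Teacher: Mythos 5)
Your construction for part (i) is not the paper's, and your justification for it misreads the model. The paper allocates the \emph{same} pair $\{c_1,c_2\}$ to an outer vertex of each of $n-2$ triangles, lets the $2(n-2)$ brushes collapse to the set $\{c_1,c_2\}$ on arrival at the hub, and then adds $\{c_3,c_4\}$ there to source the last two triangles, for $2(n-2)+2=2(n-1)$ allocations. Your claim that ``the no-repetition rule forces these $n-1$ pairs to be pairwise disjoint'' is therefore false as a statement about the model: repetition of identical colour-brushes arriving at a vertex is \emph{eliminated} (collapsed to a set), not forbidden, and the paper's own allocation deliberately repeats $\{c_1,c_2\}$ at every outer source. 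Moreover, the paper's rule that fresh colours added at a vertex must carry the smallest subscripts not already present \emph{at that vertex} actually forbids handing a colourless outer vertex the pair $\{c_{2i-3},c_{2i-2}\}$ for $i\ge 3$. Your lower-bound sketch also rests on a false premise: it is not true that all but one triangle must contain a source other than $v_0$; once enough distinct colours accumulate at the hub it can be the out-degree-two source of arbitrarily many triangles (this is exactly what happens in the paper's illustration for $Fr(3_6)$, where one triangle is tattooed from outside and the hub then sources the remaining five). The cost driver is that, under the smallest-subscript rule, outer sources can only ever deliver the set $\{c_1,c_2\}$ to the hub, so every further hub-sourced triangle still costs two fresh allocations there; neither you nor the paper writes this down (the paper merely compares three allocations and asserts minimality).

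For part (ii) your arithmetic is correct and, notably, reproduces the stated closed form: $4+\sum_{i=2}^{n}(6i-8)=3n^2-5n+6$, whereas the paper's own displayed sum $4+4(n-1)+\sum_{i=0}^{n-1}6i$ evaluates to $3n^2+n$ and does not. (Your label multiset coincides with the one obtained from the scheme of tattooing one triangle from an outer vertex with $\{c_1,c_2\}$ and then sourcing the remaining $n-1$ triangles from the hub with $c_1,\dots,c_{2(n-1)}$, which is what the formula evidently intends.) The genuine gap is your closing maximality claim. Since the index is a maximum of $|E|/(b_\tau\cdot\sum l_\Sigma(a_i))$, you must show your pattern \emph{minimises} the label sum over admissible patterns; but the paper's own part (i) allocation --- $\{c_1,c_2\}$ at $n-2$ outer sources, $\{c_3,c_4\}$ added at the hub --- is admissible under the smallest-subscript rule and has label sum $4(n-2)+4+10=4n+6$, far smaller than $3n^2-5n+6$ for $n\ge 4$, hence a strictly larger ratio. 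Your assertion that no other admissible assignment ``can decrease'' the sum fails against precisely this pattern; the inconsistency originates in the source (its part (i) construction contradicts its part (ii) formula), but your proof inherits it rather than resolving it.
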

\begin{proof}
\textit{Part (i):} Without loss of generality, consider cycle $1$, cycle $2$, \ldots, cycle $n-2$. Allocate the set of primary colour-brushes $\{c_1,c_2\}$ to each vertex $v_{1,i}$, $1\le i \le n-2$. Clearly, tattooing may initiate and on completion thereof the colour-brushes $\{\underbrace{c_1,c_1,\ldots,c_1}_{(n-2) -entries},\underbrace{c_2,c_2,\ldots,c_2}_{(n-2) -entries}\} \mapsto \{c_1,c_2\}$ are available at the common vertex $u$ for the next tattooing step. Hence, two additional primary colour-brushes are required, therefore $\{c_1,c_2,c_3,c_4\}$ suffices to complete tattooing. The primary colour-brush count is $2(n-1)$. Had $\{c_1,c_2\}$ been allocated to the $(n-1)^{th}$ cycle as well, tattooing is feasible with a primary colour-brush count of $2(n-1)$. Had $\{c_1,c_2\}$ been allocated to the $n^{th}$ cycle tattooing is possible with a primary colour-brush count of $2n$. Clearly, the initial primary colour-brush allocation (not unique) is a minimum over all possible allocations. Hence, $b_{\tau}(Fr(3,n)) = 2(n-1)$.

\textit{Part (ii):} From the primary colour-brush allocation in Part-$1$ and the definition of $\mathfrak{T}(G)$, it clearly follows that 
$\mathfrak{T}_{FSG}(Fr(3,n)) = \frac{3n}{2(n-1)(4 +4(n-1)+ \sum\limits_{i=0}^{n-1}6i)}=\frac{3n}{2(n-1)(3n^2 -5n +6)}$.
\end{proof}

\begin{defn}{\rm 
A \textit{general friendship graph}, denoted by $Fr(n_i,m_j,s_k,\ldots,t_q);\ n,m,\\ s,\ldots,t \ge 3$; $i,j,k,\ldots, q \ge 1$, is the graph obtained by taking $i$ copies of $C_n$; $j$ copies of $C_m$; $k$ copies of $C_s$;\ldots; $q$ copies of $C_t$ and joining them at a common vertex.
}\end{defn}

\begin{prop}\label{Prop-3.2}
For a general friendship graph $Fr(n_i,m_j,\ldots,t_q)$, let $\kappa = i+j+\ldots+q$. Then, with respect to $(Fr(n_i,m_j,\ldots,t_q))$ we have
\begin{enumerate}
\item[(i)] $b_\tau(Fr(n_i,m_j,\ldots,t_q))=2(\kappa-1)$.
\item[(ii)] $\mathfrak{T}_{FSG}(Fr(n_i,m_j,\ldots,t_q)) =
\begin{cases}
\frac{ni+mj+\ldots+tq}{2(\kappa-1)(i(n+1)+j(m+1)+\ldots+(q-1)(t+1) + 10)}; & q\ge 2,\\
\frac{ni+mj+\ldots+tq}{2(\kappa-1)(i(n+1)+j(m+1)+\ldots+10)}; & q=1.
\end{cases}$
\end{enumerate}
\end{prop}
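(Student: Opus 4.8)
The plan is to generalise the argument of Proposition~\ref{Prop-3.1} from the case of triangles ($C_3$) to arbitrary cycle lengths, keeping track carefully of how each cycle contributes to the arc-label sum. First I would set up the same initial allocation strategy: index the $\kappa$ cycles in any order, and to all but the last two cycles allocate the primary colour-brush set $\{c_1,c_2\}$ at the vertex adjacent to the common vertex $u$ along the chosen orientation of each cycle. Orient each cycle so that a single brush can sweep all but one arc of the cycle (the standard directed-cycle orientation used in the $C_n$ discussion before the definition of $\mathfrak{T}$), so that in cycle $\ell$ the colour $c_1$ tattoos the $|C_\ell|-1$ arcs leading into $u$ and the colour $c_2$ tattoos the single remaining arc. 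After tattooing each of these cycles, the brushes $\{c_1,c_2\}$ (after eliminating repetitions, as in the FSG convention) are available at $u$; since there are effectively two ``surviving'' colours regardless of how many cycles fed into $u$, exactly two additional primary colours $\{c_3,c_4\}$ are needed to finish the last cycles, giving the count $b_\tau = 2(\kappa-1)$. As in Part~(i) of Proposition~\ref{Prop-3.1}, I would check the boundary cases (allocating to the $(\kappa-1)$st cycle as well costs nothing extra, allocating to the $\kappa$th cycle costs $2$ more) to confirm optimality, and note that the same lower-bound reasoning — one brush at a vertex is never enough because $\cP_0^*(\{c_1\})=\{c_1\}$ cannot blend — forces a minimum of two brushes per initially loaded cycle.

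For Part~(ii), I would assemble $\mathfrak{T}_{FSG}$ directly from its definition $\mathfrak{T}(G)=\frac{|E(G)|}{\tau(G)\cdot\sum_{a_i\in A(G)} l_\Sigma(a_i)}$ using the labelling produced above. The numerator is $|E(G)| = ni + mj + \cdots + tq$, the total number of edges over all $\kappa$ cycles. For the denominator, $\tau(G)$ is replaced by $b_\tau(G)=2(\kappa-1)$, and the arc-label sum breaks into per-cycle contributions. A cycle of length $L$ that was loaded with $\{c_1,c_2\}$ contributes $1\cdot(L-1)$ from the $c_1$-labelled arcs plus $2$ from the single $c_2$-labelled arc, i.e. $L+1$; summed over the first $\kappa-2$ loaded cycles (grouped by length) this gives the $i(n+1)+j(m+1)+\cdots$ terms. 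The two unloaded cycles are tattooed by the freshly introduced $\{c_3,c_4\}$ (using the rule that new primaries get the smallest free subscripts), each contributing $3\cdot(L-1)+4 = 3L+1$ in the worst-label case; with the convention that labels are chosen to maximise the index, one instead wants the arc-label sum as small as the generalisation permits, and the ``$+10$'' in the stated formula is precisely the residual constant coming from those last one or two cycles after the variable part has been extracted. The two cases $q\ge 2$ and $q=1$ differ only in whether the $\kappa$th cycle has the same length as the $(\kappa-1)$st (so that its contribution merges into the summation) or is the unique shortest type and must be pulled out separately, which is why the last summand of the denominator changes shape.

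The main obstacle I anticipate is pinning down the constant (the ``$+10$'' and the ``$(q-1)(t+1)$'' adjustment) rigorously: unlike the $Fr(3,n)$ case, where all cycles have the same length and the bookkeeping collapses into the clean sum $\sum_{i=0}^{n-1} 6i$, here the last two cycles must be handled explicitly and their label contributions depend on an interaction between the orientation choice, the assignment of the new colours $c_3$ and $c_4$, and the $\mathfrak{A}_p(G)$ arc-choice pattern. I would therefore isolate a small lemma computing, for a single directed cycle of length $L$ loaded with a two-element primary set $\{c_a,c_b\}$ (with $a<b$), the minimum possible value of $\sum l_\Sigma$ over all arc-choice patterns — expecting the answer $(L-1)a + b$ — and then sum this template over the cycles with the colour indices dictated by the allocation. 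Once that per-cycle formula is in hand, Part~(ii) reduces to algebra: substitute, group the cycles by length, separate the two special cycles, and simplify; the two displayed cases then fall out by distinguishing whether the smallest cycle-type has multiplicity one or more.
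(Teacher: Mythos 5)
Your overall strategy coincides with the paper's: allocate $\{c_1,c_2\}$ at a near-common vertex of $\kappa-2$ of the cycles, let the surviving pair $\{c_1,c_2\}$ collect at the common vertex $u$, add $\{c_3,c_4\}$, and count arc labels cycle by cycle; the paper likewise splits Part (ii) into the cases $q\ge 2$ and $q=1$ according to whether two copies of the shortest cycle $C_t$, or one copy of $C_t$ and one copy of the second-shortest $C_s$, are left unloaded. Your proposed lemma --- that a cycle of length $L$ swept by $\{c_a,c_b\}$ with $a<b$ has minimum label sum $(L-1)a+b$ --- is exactly the ``counting technique'' the paper invokes without stating, and it is the right tool. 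One point you leave implicit but should make explicit: the reason the \emph{shortest} cycles are the ones left unloaded follows from your own lemma, since the unloaded cycles receive the larger colour indices and a contribution of the form $3(L-1)+4$ grows with $L$.

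The genuine gap is the one you flag yourself: the constant $+10$. Applying your own lemma honestly, the two unloaded cycles in the case $q\ge 2$ contribute $(t+1)+(3t+1)=4t+2$ if one of them can be swept by the returning $\{c_1,c_2\}$ and the other by the new $\{c_3,c_4\}$, or $2(3t+1)=6t+2$ if both need the new colours; in neither reading is the contribution a constant $10$ independent of $t$. (The value $10=3(L-1)+4$ at $L=3$, so the stated formula appears to hard-code a triangle as the last cycle.) Note also that the term $(q-1)(t+1)$ charges $q-1$ copies of $C_t$ at the loaded rate $t+1$ while only $q-2$ copies are loaded, so the bookkeeping of which cycle pays which rate is already inconsistent before the $+10$ enters. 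A useful sanity check makes the problem concrete: specialising all cycles to triangles should recover Proposition \ref{Prop-3.1}(ii), but the bracket here becomes $4(\kappa-1)+10$, linear in $\kappa$, against the quadratic $3n^2-5n+6$ there. So the obstacle you anticipate is not a residual bookkeeping nuisance that further care will dissolve; your (correct) per-cycle count and the displayed formula genuinely disagree, and your lemma-first route is the right way to expose and repair the discrepancy rather than to reproduce the statement as written.
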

\begin{proof}
\textit{Part (i):} Let the common vertex be $u$ and label each cluster of cycles say, the cluster $s_k$ as cycle $C^{(1)}_s$, cycle $C^{(2)}_s$, \ldots, cycle $C^{(k)}_s$. Also, let any cycle $C^{(i)}_s \in s_k$, $1\le i \le k$ be $uv^{(s)}_{1,i}$$,v^{(s)}_{2,i}$, $v^{(s)}_{3,i}$, \ldots, $v^{(s)}_{s-1,i},u$. Since $\tau(C_n) =2$, $n\ge 3$ the initial allocation of $\{c_1,c_2\}$ sets at any $\kappa -2$ cycles could be at $v^{(s)}_{1,i}$, respectively. The proof then follows similar to that of Proposition \ref{Prop-3.1}(Part (i)).

\vspace{0.25cm}

\textit{Part (ii):} Assume without loss of generality, that $C_t$ are the smallest cycles and that $C_s$ are the second smallest cycles. Here, we need to consider the following cases.

\vspace{0.25cm}

\textit{Case (i):} If $q\ge 2$ then, since $\tau(C_n) =2,\ n\ge 3$, the initial allocation of $\{c_1,c_2\}$ sets at all $\kappa-2$ cycles excluding two copies of $C_q$, at $v^{(s)}_{1,i}$, $\forall\, i$ in $C_n,C_m,C_s,\ldots,C_t$ to ensure minimum sum of arc labels. Applying the \textit{counting technique} of Part (i), we have $\mathfrak{T}_{FSG}(Fr(n_i,m_j,s_k,\ldots,t_q)) =\frac{ni+mj+sk+\ldots +tq}{2(\kappa-1)(i(n+1)+j(m+1)+k(s+1)+\ldots+ (q-1)(t+1)+10)}$.

\vspace{0.25cm}

\textit{Case (ii):} If $q=1$ then allocate $\{c_1,c_2\}$ sets at all $\kappa -2$ cycles excluding $C_t$ and one copy of $C_s$, at $v^{(s)}_{1,i};\ \forall\, i$ in $C_n,C_m,C_s,\ldots,C_t$ to ensure minimum sum of arc labels. Clearly, the cycle of $C_t$ must be tattooed by $\{c_3,c_4\}$. Applying the \textit{counting technique} of Part (i), we have the equation as follows.
$$\mathfrak{T}_{FSG}(Fr(n_i,m_j,s_k,\ldots,t_q)) = \frac{ni+mj+sk+\ldots +tq}{2(\kappa-1)(i(n+1)+j(m+1)+k(s+1)+\ldots+ 10)}.$$
This completes the proof.
\end{proof}

The notion of a new family of graphs namely $J9$-graphs and the notion of a Joost graph has been introduced in \cite{KS1} as follows.  

\begin{defn}\label{Defn-2.5}{\rm 
Consider the inter-connected paths $u_1,v_{1,j},v_{2,j},\ldots,v_{n-2,j},u_2$, $n\ge 3$ for $j=1,2,3,\ldots,k$, where $k\ge 1$. The family of graphs is called \textit{$J9$-graphs}. A member of the $J9$-graphs is denoted $P^{(k)}_n$ and is called a \textit{Joost graph}.
}\end{defn}

\begin{prop}\label{Prop-3.3}
For $J9$-graphs in respect of $FSG(P^{(k)}_n)$, $n\ge 3$ we have
\begin{enumerate}
\item[(i)] $b_\tau(P^{(k)}_n) = k$, and
\item[(ii)] $\mathfrak{T}_{FSG}(P^{(k)}_n) =
\begin{cases}
1, & \text {if}\ k=1,\\
\frac{2(n-1)}{2n-1}, & \text{if}\ k=2,\\
\frac{2k(n-1)}{2n+ (n-1)k(k-1)}, & \text {if}\ k \ge 3.
\end{cases}$
\end{enumerate}
\end{prop}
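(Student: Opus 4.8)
The plan is to follow the two-step pattern of Propositions~\ref{Prop-3.1} and~\ref{Prop-3.2}: first determine $b_\tau(P^{(k)}_n)$ by producing an efficient orientation together with an FSG colour-brush allocation and then showing that allocation is minimal, and afterwards compute $\mathfrak{T}_{FSG}(P^{(k)}_n)$ by identifying the arc-choice pattern that minimises the total label sum $\sum_{a}l_\Sigma(a)$ and substituting into the definition of $\mathfrak{T}$.

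For Part~(i) the upper bound $b_\tau(P^{(k)}_n)\le k$ comes from orienting each of the $k$ internal $u_1$--$u_2$ paths consistently from $u_1$ to $u_2$. Then $u_1$ is the unique source, with out-degree $k$, while every other vertex has out-degree at most its in-degree; allocating the $k$ distinct primary colour-brushes $\{c_1,\dots,c_k\}$ to $u_1$, the brush $c_j$ transits the whole $j$-th path, no vertex ever sees a repeated colour, and tattooing terminates. For the reverse inequality I would use that an orientation admits a tattooing only if it is acyclic (a consistently directed cycle cannot be tattooed, as is already implicit in $\tau(C_n)=2$), that for an acyclic orientation $\alpha$ one has $b_r^{\alpha}(G)=\sum_v\max\{0,d^{+}_\alpha(v)-d^{-}_\alpha(v)\}$ (clean in topological order) together with $b_r^{\alpha}(G)\le b_\tau^{\alpha}(G)$, and then that this sum is at least $k$ for every acyclic orientation. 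The last point follows from a short case analysis: the two hubs have degree $k$ and every other vertex degree $2$, so each hub's out-excess is congruent to $k$ modulo $2$ and each interior vertex has even out-excess; if the sum of the positive out-excesses were smaller than $k$, the $k$ internal paths would have to be (essentially) monotone, and two oppositely directed monotone paths would close a directed cycle, contradicting acyclicity. Hence $b_\tau(P^{(k)}_n)=k$.

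For Part~(ii), with $b_\tau=k$ fixed, I would minimise $\sum_a l_\Sigma(a)$ over orientations $\alpha$ with $\tau^{\alpha}=k$ and over arc-choice patterns $\mathfrak{A}_p$, using the minimality rule stated just after the definition of $\mathfrak{T}(G)$: smallest admissible subscripts, and --- since a single colour tattoos every arc of the maximal directed run it travels along --- the smallest index routed along the longest run. When $k=1$ the graph is the path $P_n$, every arc is labelled $(1)$, $\sum_a l_\Sigma(a)=n-1$, and $\mathfrak{T}_{FSG}=1$. When $k=2$, $P^{(2)}_n=C_{2(n-1)}$ and the efficient orientation is the one-source--one-sink orientation with the short side of length $1$; labelling the long side (of length $2n-3$) by $c_1$ and the unit side by $c_2$ gives $\sum_a l_\Sigma(a)=2n-1$ and hence the claimed value. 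For $k\ge 3$ the structural point is that an interior vertex (degree $2$) cannot carry a colour between two distinct internal paths, and a hub can do so only at the price of a directed cycle; hence the $k$ paths must be tattooed by runs carrying distinct colours of increasing index, and summing (run length)$\times$(colour index) over the paths produces the quadratic contribution $(n-1)k(k-1)$, exactly as the sum $\sum 6i$ arises in Proposition~\ref{Prop-3.1}. Substituting the resulting label sums into the definition of $\mathfrak{T}$ gives the three cases.

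The step I expect to be the main obstacle is the optimality claim in Part~(ii) for $k\ge 3$: one has to rule out every alternative orientation--allocation--pattern triple. The delicate trade-off is that pushing a high-index colour onto a short run --- which is exactly what makes the $k=2$ one-source--one-sink orientation efficient --- creates an interior source, and interior sources are costly within a brush budget pinned to total exactly $k$. The argument therefore has to weigh, for each way of inserting interior sources, the savings in $\sum_a l_\Sigma(a)$ against the forced redistribution of the $k$ permitted colour-brushes, and in particular to check carefully which ``long-run'' patterns that thread one colour through parts of two consecutive paths can be realised by an acyclic orientation with $\tau^{\alpha}=k$; this is where the case-by-case verification that the stated value is genuinely the maximum must be done.
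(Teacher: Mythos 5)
Your Part (i) is sound and in fact more careful than the paper's own argument: the paper allocates $\{c_1,c_2\}$ at the interior vertex $v_{1,1}$ adjacent to $u_1$, relays the colour arriving at $u_1$ into the remaining paths with $k-2$ fresh colours, and asserts minimality by a loose induction, whereas your lower bound via acyclicity and $b_r^{\alpha}(G)=\sum_v\max\{0,d^{+}(v)-d^{-}(v)\}$ is a genuinely different and more principled route (though your closing parity/monotone-path case analysis is only sketched and would need to be completed).

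The genuine gap is in Part (ii) for $k\ge 3$, and it is not merely the optimality verification you flag but the construction itself. Your structural claim that a hub can carry a colour between two internal paths ``only at the price of a directed cycle'' is false: a directed cycle arises only if the colour traverses the whole of one path into the hub and the whole of another out of it. The extremal configuration used by the paper places $\{c_1,c_2\}$ at $v_{1,1}$, sends $c_1$ along the $n-2$ arcs towards $u_2$ (contributing $n-2$) and $c_2$ along the single arc into $u_1$ (contributing $2$), and then relays $c_2$ through the hub together with $k-2$ freshly allocated colours $c_1,c_3,\ldots,c_{k-1}$, so that the remaining $k-1$ full paths carry $c_1,\ldots,c_{k-1}$ and contribute $(n-1)\sum_{i=1}^{k-1}i$; the resulting label sum $n+\tfrac{1}{2}(n-1)k(k-1)$ yields the stated value. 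Your configuration, in which the $k$ paths carry $k$ distinct colours all dispatched from $u_1$, gives label sum $(n-1)\cdot\tfrac{1}{2}k(k+1)$, larger by $(n-1)k-n>0$, and hence the value $\tfrac{2}{k+1}$, which is strictly smaller than the claimed $\tfrac{2k(n-1)}{2n+(n-1)k(k-1)}$ for all $n\ge 3$, $k\ge 2$. Indeed, if no colour may pass through the hub, the $k-1$ out-arcs at $u_1$ would require $k-1$ fresh colours on top of the two at the starting vertex, contradicting Part (i); the relay is therefore forced, and your ``runs of increasing index, one per path'' description cannot produce the $2n$ term in the denominator.
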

\begin{proof}
\textit{Part (i):} Since $P^{(1)}_n = P_n$, it follows that $b_\tau(P^{(1)}_n)=1$. Also, since $P^{(2)}_n=C_n$, $b_\tau(P^{(2)}_n)=2$. Consider $P^{(3)}_n$ and allocate a minimum set of colour-brushes, $\{c_1,c_2\}$ at say $v_{1,1}$. The aforesaid allocation is possible without any loss of generality. Clearly, $c_1$ can tattoo along arc $(v_{1,1},u_1)$ and $c_2$ can tattoo along all arcs towards $u_2$, or vice versa. Till this point, the tattoo count is $2$. Consider a vertex, say $u_1$, with $\{c_2\}$ allocated to it. Since $d_{P^{(3)'}_n}(u_1)=d_{P^{(2)}_n}(u_1)=2$, a minimum number of additional primary colours must be added to have $\{c_1,c_2\}$ available to proceed with tattooing. Hence, the total tattoo count is $3$.

\vspace{0.25cm}

Assume that the result holds for all $1\le k\le \ell$ hence, $b_\tau(P^{(\ell)}_n) = \ell$. Replicating the colour-brush allocation procedure for $P^{(\ell)}_n$, which is iteratively similar to that described for $P^{(3)}_n$, it follows through immediate induction to hold for $P^{(\ell+1)}_n$. Therefore, the result holds in general.

\vspace{0.25cm}

\textit{Part (ii):} Cases (i), (ii) follow trivially.

\vspace{0.25cm}

\textit{Case (iii):} Let $k=3$. The colour-brush allocation procedure described in Part (i) ensures the minimum arc labeling. Then, we have 
\begin{eqnarray*}
\mathfrak{T}_{FSG}(P^{(k)}_n) & = & \frac{k(n-1)}{2+(n-2) + (n-1)\sum\limits_{i+1}^{k-1}i}\\
& = & \frac{k(n-1)}{2+(n-2) + (n-1)\sum\limits_{i+1}^{k-1}i}\\
& = & \frac{k(n-1)}{2+(n-2) + \frac{1}{2}(n-1)k(k-1)} \\
& = & \frac{2k(n-1)}{2n+ (n-1)k(k-1)}.
\end{eqnarray*}
This completes the proof.
\end{proof}

\subsection{Tattoo Index}

From the first generalisation step we now progress to the tattoo index. Consider the set $\{c_1,c_2,c_3,\ldots,c_k\}$. Order the subsets as follows.

\vspace{-0.65cm}

\begin{align*}
& \{c_1\},\{c_2\},c_3\},\ldots,\{c_k\},\\
& \{c_1,c_i\}_{2\le i\le k},\{c_2,c_i\}_{3\le i\le k},\{c_3,c_i\}_{4\le i\le k},\ldots,\{c_{k-1},c_k\},\\
& \{c_1,c_2,c_i\}_{3\le i\le k},\{c_1,c_3,c_i\}_{4\le i\le k},\{c_1,c_4,c_i\}_{5\le i\le k},\ldots,\{c_1,c_{k-1},c_k\},\\
& \ldots \ldots \ldots\\
& \ldots \ldots \ldots\\
& \{c_1,c_2,c_3,\ldots,c_k\}.
\end{align*}

The same ordering principle will apply to a set of colour-brushes $\{c_i,c_j,c_k,\ldots,c_\ell\}$, $i<j<j,\ldots<\ell$.

%%%%%%%%%%%%%%%%%%%%%%%%%%%%%%%%%
%%%%%%%%%%%%%%%%%%%%%%%%%%%%%%%%%%%%%%%%
%%%%%%%%%%%%%%%%%%%%%%%%%%%%%%%%%%%%%%%%%%%%%%%%%

From Definition \ref{Defn-2.2}, it follows that $l_\Sigma(a_i)$ is minimised by dispatching primary colour-brushes or colour blends sequentially in the order depicted above. Any $n\in \N$ lies in the interval $2^i\le n \le 2^{i+1}-1$ for some $i\in \N_0$. It implies that a vertex $v$ with $d^+(v) = t$ requires $\lceil log_2(t+1)\rceil$ primary colours allocated. Therefore, a closed formula does not exist to determine the arc labels of the arcs corresponding to $d^+(v)$. It must follow the \textit{rule of sequential colour-brush allocation.}

\begin{ill}{\rm 
For the friendship graph $Fr(3_6)$, the tattoo index $\mathfrak{T}(Fr(3_6))$ is determined by first tattooing a cycle $C_3$ with $\{c_1,c_2\}$. Following this first step, $5$ cycles $C_3$ remain with $\{c_1,c_2\}$ at the common vertex $u$. Thus, $10$ out-arcs await tattooing and hence $\lceil log_2(10+1)\rceil$ primary colours must be allocated. Hence, $\tau(Fr(3_6))=4$. By allocating the colour-brushes $\{c_1,c_2,c_3,c_4,c_{1,2},c_{1,3},c_{1,4},c_{2,3},c_{2,4},c_{3,4}\}$, tattooing can be completed. Therefore, the minimum sum of arc labels is given by $4+(4+10+10+16+19)=63$. Also, the number of arcs is $18$. Hence, $\mathfrak{T}(Fr(3_6)) = \frac{18}{4\times63}=\frac{1}{14}$.}
\end{ill}

\begin{ill}{\rm
For the Joost graph $P^{(7)}_4$, the the tattoo index $\mathfrak{T}(P^{(7)}_4)$ is determined by allocating $\{c_1,c_2,c_3,c_{1,2},c_{1,3},c_{2,3},c_{1,2,3}\}$ to $u_1$. The minimum sum of arc labels is given by $3+6+9+9+12+15+18 = 72$.  Since $\tau(P^{(7)}_4) =\lceil log_2(7+1)\rceil = 3$, $\mathfrak{T}(P^{(7)}_4) = \frac{21}{3\times72}=\frac{7}{72}$.}
\end{ill}

\section{Conclusion}

In this paper, we have discussed a new concept namely the tattoo index associated with tattooing, as a colouring extension of cleaning models used in graph theory and introduced some interesting concepts and parameters in that area. As pointed out in [10] the concept of tattooing has many real world applications and many of them are still to be explored and to be discovered. This concept is mainly based on the fact that many biological or virtual propagation models or mechanisms rely on mutation to reach a threshold level before propagation ignites. 

\vspace{0.25cm}

In \cite{KS1}, it is observed that for a friendship graph, the optimal tattooing sequence begin at a vertex with minimum degree and for a Joost graph, the optimal tattooing sequence begins at a vertex with maximum degree. In a wheel graph $W_{n+1}=C_n+K_1$, an optimal sequence must begin at a vertex of degree equal to $3$. Although the allocation $\{c_1,c_2,c_{1,2}\}$ at the vertex with degree equal to $3$, implies $\tau(W_{n+1})=2$, it does not provide the optimal tattoo index. These observations emphasise the inherent complexity of tattooing a graph optimally.

\vspace{0.25cm}

Besides determining the invariants $b_\tau(G), \mathfrak{T}_{FSG}(G)$ and $\tau(G)$ the invariant $\mathfrak{T}(G)$ is open for complexity and probability analysis. Determining the tattoo number and tattoo index of different graph classes offers much for further investigations. An algorithmic study of this graph parameter is also worth for future studies.


\begin{thebibliography}{25}

\bibitem{BLS1} A. Bj\"{o}rner, L. Lovasz, and P. Shor, \textit{Chip firing games on graphs}, European J. Combin. \textbf{12}(1991), 283-291.

\bibitem{BM1} J. A. Bondy and U.S.R. Murty, \textbf{Graph theory with applications,} Macmillan Press, London, 1976.

\bibitem {BLS} A. Brandst\"{a}dt, V. B. Le and J. P. Spinrad, {\bf Graph classes: A survey}, SIAM, Philadelphia, 1999.

\bibitem{CL1} G. Chartrand and L. Lesniak, \textbf{Graphs and digraphs}, CRC Press, 2000.

\bibitem{DD1} D. Dyer, \textbf{Sweeping graphs and digraphs}, Ph.D. thesis, Simon Fraser University, Canada, 2004.

\bibitem{CZ1} G. Chartrand and P. Zhang, \textbf{Chromatic graph theory}, CRC Press, 2009.

\bibitem {FH}  F. Harary, {\bf Graph theory}, Addison-Wesley Pub. Co. Inc., Philippines, 1969.

\bibitem{LHH1} L. H. Harper, \textit{Optimal assignments of numbers to vertices}, SIAM J. Appl. Math., \textbf{12} (1964), 131–135.
	
\bibitem{JT1} T. R. Jensen and B. Toft, {\bf Graph colouring problems}, John Wiley \& Sons, 1995.

\bibitem{KS1} J. Kok, and N. K. Sudev, \textit{Tattooing and the tattoo number of graphs}, Preprint,  arXiv:1603.00303.

\bibitem{KSK1} J. Kok, C. Susanth and S.J. Kalayathankal, \textit{Brush numbers of certain Mycielski graphs}, Int. J. Pure Appl. Math., {\bf 106}(2)(2016), 663-675., DOI: 10.12732/ijpam.v106i2.28.

\bibitem{SM1} S. McKeil, \textit{Chip firing cleaning process}, M. Sc. Thesis, Dalhousie University, Canada., 2007.

\bibitem{MEM1} M. E. Messinger, \textit{Methods of decontaminating a network}, Ph.D. Thesis, Dalhousie University, Canada., 2008.

\bibitem{MEM2} M. E. Messinger, R. J. Nowakowski and P. Pralat, \textit{Cleaning a network with brushes}. Theor. Comput. Sci., \textbf{399}(2008), 191-205.

\bibitem{TDP1} T. D. Parsons, \textit{Pursuit-evasion in a graph}, in \textbf{Theory and Applications of Graphs}, Y. Alavi and D. R. Lick, (Eds),  Springer, Berlin, 1976, 426-441.

\bibitem{TDP2} T. D. Parsons, \textit{The search number of a connected graph}, in \textbf{Proc. Ninth Southeastern Conf. Combinatorics, Graph Theory and Computing}, Congressus Numerantium, XXI, Winnipeg, 1978, 549-554.

\bibitem{TST1} T. S. Tan,\textit{The brush number of the two dimensional torus}, Pre-print, arXiv: 1012.4634v1.

\bibitem{EWW} E. W. Weisstein, {\bf CRC concise encyclopaedia of mathematics}, CRC press, 2011.

\bibitem {DBW} D. B. West, {\bf Introduction to graph theory}, Pearson Education Inc., 2001.

\end{thebibliography}
\end{document}